\documentclass[10pt]{amsart}
\usepackage{amssymb,amsbsy,amsmath,amsfonts,times, amscd}
\usepackage{latexsym,euscript,exscale}

\usepackage{helvet}

\title{Rigidity of commuting affine actions on reflexive Banach spaces}
\author {Christian Rosendal}
\address{Department of Mathematics, Statistics, and Computer Science (M/C 249)\\
University of Illinois at Chicago\\
851 S. Morgan St.\\
Chicago, IL 60607-7045\\
USA}
\email{rosendal.math@gmail.com}
\urladdr{http://homepages.math.uic.edu/$~$rosendal}

\date {}
\linespread {1.0}

\newcommand{\norm}[1]{\lVert#1\rVert}
\newcommand{\Norm}[1]{\big\lVert#1\big\rVert}

\newcommand {\Z}{\mathbb Z}

\newcommand{\eps}{\epsilon}

\newcommand{\tom} {\emptyset}

\newcommand{\til}{\rightarrow}

\newcommand {\ku} {\mathcal}

\newtheorem{thm}{Theorem}
\newtheorem{cor}[thm]{Corollary}

\newtheorem{prop} [thm] {Proposition}

\theoremstyle{definition}

\usepackage{fouriernc}

\begin{document}

\thanks{The author's research was partially supported by NSF grants DMS 0901405 and DMS 1201295}

\maketitle

\begin{abstract}
We give a simple argument to show that if $\alpha$ is an affine isometric action of a product $G\times H$ of topological groups on a reflexive Banach space $X$ with linear part $\pi$, then either $\pi(H)$  fixes a unit vector or $\alpha|_G$ almost fixes a point on $X$.

It follows that any affine isometric action of an abelian group on a reflexive Banach space $X$, whose linear part fixes no unit vectors, almost fixes points on $X$. 
\end{abstract}

\section{Linear representations and cocycles}
When $X$ is a vector space, the group of bijective affine transformations of $X$, ${\rm Aff}(X)$, can be decomposed as a semidirect product
$$
{\rm Aff}(X)=GL(X)\ltimes X,
$$
with respect to the natural action of $GL(X)$ on $X$. The product in $GL(X)\ltimes X$ is then simply $(T,x)\cdot (S,y)=(TS, Ty+x)$, while the corresponding action  of $(T,x)\in GL(X)\ltimes X$ on $X$ is  given by $(T,x)\cdot y=Ty+x$. 

Thus, an action $\alpha$ of a group $G$ by affine transformations of the vector space $X$ can be viewed as a homomorphism of $G$ into ${\rm Aff}(X)$, which thus can be split into a linear representation $\pi\colon G\til GL(X)$, called the {\em linear part of $\alpha$}, and an associated {\em cocycle} $b\colon G\til X$ such that the following {\em cocycle identity} holds,
$$
b(gf)=\pi(g)b(f)+b(g),
$$
for all $ g,f\in G$.

If, moreover, $X$ is a reflexive Banach space and $\pi\colon G\til GL(X)$ is a fixed isometric linear representation of a topological group $G$ on $X$ that is {\em strongly continuous}, i.e., such that for every $x\in X$ the map $g\in G\mapsto gx\in X$ is continuous, we can consider the corresponding vector space $Z^1(G,\pi)$ of continuous cocycles $b\colon G\til X$ associated to $\pi$. 
The subspace $B^1(G,\pi)\subseteq Z^1(G,\pi)$ consisting of those cocycles $b$ for which the corresponding affine action $\alpha$ fixes a point on $X$, i.e., for which there is some $x\in X$ such that $b(g)=x-\pi(g)x$ for all $g\in G$, is called the set of  {\em coboundaries}. Note that if $b$ is a coboundary, then $b(G)$ is a bounded subset of $X$. Conversely, if $b(G)$ is a bounded set, then any orbit $\ku O$ of the corresponding affine action is bounded and so, by reflexivity of $X$,  its closed convex hull $C=\overline{\rm conv}(\ku O)$ is a weakly compact convex set on which $G$ acts by affine isometries. It follows by the Ryll-Nardzewski fixed point theorem \cite{Ryll} that $G$ fixes a point on $C$, meaning that $b$ must be a coboundary.

Every compact set $K\subseteq G$ determines a seminorm $\norm\cdot_K$ on $Z^1(G,\pi)$ by $\norm{ b}_K=\sup_{g\in K}\norm{b(g)}$ and the family of seminorms thus obtained endows $Z^1(G,\pi)$ with a locally convex topology. With this topology, one sees that a cocycle $b$ belongs to the closure $\overline{B^1(G,\pi)}$ if and only if the corresponding affine action $\alpha=(\pi,b)$ {\em almost has fixed points}, that is, if for any compact set $K\subseteq G$ and $\eps>0$ there is some $x=x_{K,\eps}\in X$ verifying
$$
\sup_{g\in K}\norm{\big(\pi(g)x+b(g)\big)-x}=\sup_{g\in K}\norm{b(g)-\big(x-\pi(g)x\big)}<\eps.
$$

If, for any $K$, we can choose $x=x_{K,1}$ above to have arbitrarily large norm, we see that the supremum
$$
\sup_{g\in K}\Norm{\pi(g)\frac x{\norm x}-\frac x{\norm x}}<\frac{\sup_{g\in K}\norm{b(g)}+1}{\norm x}
$$
can be made arbitrarily small, which means that the linear action $\pi$ {\em almost has invariant unit vectors}.
If, on the other hand, for some $K$ the choice of $x_{K,1}$ is bounded (but non-empty), then the same bound holds for any compact $K'\supseteq K$, whereby  we find that $b(G)\subseteq X$ is a bounded set, i.e., that $b\in B^1(G,\pi)$. Thus, this shows that if $\pi$ does not almost have invariant unit vectors, the set $B^1(G,\pi)$ will be closed in $Z^1(G,\pi)$. In fact, if $b\in Z^1(G,\pi)\setminus B^1(G,\pi)$ and $\pi$ does not almost have invariant unit vectors, then for any constant $c$ there is a compact set $K\subseteq G$ such that no vector is $(\alpha(K),c)$-invariant, where $\alpha=(\pi,b)$.

Conversely, a result of A. Guichardet \cite{Gu}, valid for locally compact $\sigma$-compact $G$, states that if $\pi$ does not have invariant unit vectors and $B^1(G,\pi)$ is closed in $Z^1(G,\pi)$, then $\pi$ does not almost have invariant unit vectors.

We define the {\em first cohomology group} of $G$ with coefficients in $\pi$ to be the quotient space $H^1(G,\pi)=Z^1(G,\pi)/B^1(G,\pi)$, while the {\em reduced cohomology group} is $\overline{H^1}(G,\pi)=Z^1(G,\pi)/\overline{B^1(G,\pi)}$.

\section{Affine actions of product groups on reflexive spaces}
In the following, let $X$ be a reflexive Banach space, $G$ and $H$ be topological groups and $\pi$ be a strongly continuous linear isometric representation of $G\times H$ on $X$. We also fix a cocycle $b\in Z^1(G\times H,\pi)$ and let $\alpha$ be the corresponding affine isometric action of $G\times H$ on $X$.

\begin{prop}\label{refl}
One of the following must hold,
\begin{enumerate}
\item there is a  $\pi(H)$-invariant  unit vector,
\item for any closed convex $\alpha(H)$-invariant sets $C\subseteq X$, $\alpha|_G$ almost has fixed points on $C$. 
\end{enumerate}
\end{prop}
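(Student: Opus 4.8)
The plan is to prove the dichotomy in contrapositive form: assuming (1) fails, so that $\pi(H)$ admits no nonzero invariant vector, I would fix an arbitrary nonempty closed convex $\alpha(H)$-invariant set $C$ and show directly that $\alpha|_G$ almost fixes points on it. Unravelling the definitions, it suffices to produce, for every compact $K\subseteq G$ and every $\eps>0$, a single point $x\in C$ with $\sup_{g\in K}\norm{\alpha(g)x-x}<\eps$, where $\alpha(g)x-x=(\pi(g)-\Id)x+b(g)$. The idea is to search for $x$ among the convex combinations $x=\sum_i\lambda_i\,\alpha(h_i)x_0$ of $\alpha(H)$-translates of a fixed base point $x_0\in C$; since $C$ is convex and $\alpha(H)$-invariant, every such $x$ again lies in $C$.

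Two ingredients drive the computation. First, commutativity of $G$ and $H$ inside $G\times H$ forces a relation on the cocycle: expanding $b(gh)=b(hg)$ via the cocycle identity yields $(\Id-\pi(h))b(g)=(\Id-\pi(g))b(h)$ for all $g\in G$, $h\in H$. Substituting $\alpha(h_i)x_0=\pi(h_i)x_0+b(h_i)$ into $\alpha(g)x-x$ and using this relation, the $b(h_i)$-terms collapse and one is left with
$$
\alpha(g)x-x=(\pi(g)-\Id)w+\sum_i\lambda_i\,\pi(h_i)b(g),\qquad w:=\sum_i\lambda_i\,\pi(h_i)x_0.
$$
As each $\pi(g)$ is an isometry, the first term is bounded by $2\norm{w}$, uniformly in $g$. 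Second, I would use the lemma where the hypothesis on $H$ enters: if $\pi(H)$ has no nonzero invariant vector, then $0\in\overline{\rm conv}\big(\pi(H)y\big)$ for every $y\in X$. This is immediate from reflexivity and Ryll--Nardzewski, since $\overline{\rm conv}(\pi(H)y)$ is a weakly compact convex $\pi(H)$-invariant set, hence contains a $\pi(H)$-fixed point, which must be $0$.

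To make both terms small simultaneously, I would apply this lemma not to $\pi$ but to the diagonal representation $\pi^{\oplus m}$ on the reflexive space $X^{\oplus m}$ (with an $\ell^2$-sum norm): an invariant vector there has invariant coordinates, hence is $0$, so the lemma applies. Because $b$ is continuous and $K$ is compact, $b(K)$ is totally bounded; choosing a finite $\eps$-net $b(g_1),\dots,b(g_{m-1})$ and feeding the tuple $(x_0,b(g_1),\dots,b(g_{m-1}))$ into the lemma produces coefficients $\lambda_i\ge0$ with $\sum_i\lambda_i=1$ and $h_i\in H$ making $\norm{w}$ and each $\norm{\sum_i\lambda_i\pi(h_i)b(g_j)}$ smaller than $\eps$. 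For an arbitrary $g\in K$, picking $g_j$ with $\norm{b(g)-b(g_j)}<\eps$ and using that the $\pi(h_i)$ are isometries transfers this bound to $\norm{\sum_i\lambda_i\pi(h_i)b(g)}$, and combining the two estimates yields $\sup_{g\in K}\norm{\alpha(g)x-x}<4\eps$ for $x=\sum_i\lambda_i\alpha(h_i)x_0\in C$; as $\eps>0$ was arbitrary, this is exactly (2).

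I expect the main obstacle to be the case in which $\alpha|_H$ has no fixed point, i.e.\ the $H$-orbits are unbounded: there one cannot simply run Ryll--Nardzewski on $\alpha|_H$ to center the base point. The device that sidesteps this is precisely the passage to the finite power $\pi^{\oplus m}$, which lets the lemma control the linear orbits $\pi(H)x_0$ and $\pi(H)b(g_j)$ all at once, while the cocycle relation is what converts this linear control back into smallness of the affine displacement $\alpha(g)x-x$. Checking that a single convex combination works uniformly over the whole compact set $K$, using total boundedness of $b(K)$, is the one point that needs care.
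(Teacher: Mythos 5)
Your proposal is correct and follows essentially the same route as the paper: both deduce from Ryll--Nardzewski and reflexivity that $0$ lies in the closed convex hull of every orbit of the diagonal representation of $H$ on a finite power of $X$, take the candidate almost-fixed point to be a convex combination $\sum_i\lambda_i\alpha(h_i)x_0$ of $\alpha(H)$-translates of a base point, and use a finite net coming from compactness of $K$ together with commutativity to get a uniform bound over $K$. The only difference is bookkeeping: the paper feeds the displacement vectors $x_0-\alpha(g_k)x_0$ into the lemma and lets the cocycle terms cancel via $\alpha(h)u-\alpha(h)v=\pi(h)(u-v)$, whereas you expand $b$ explicitly and invoke the derived identity $(\Id-\pi(h))b(g)=(\Id-\pi(g))b(h)$.
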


\begin{proof}
Assume that there are no $\pi(H)$-invariant unit vectors in $X$. Then, if $\pi^n\colon H\til GL(X^n)$ denotes the diagonal representation on $X^n=(X\oplus\ldots\oplus X)_2$, $\pi^n(H)$ has no invariant unit vectors on $X^n$. By reflexivity, for any $x\in X^n$, $C=\overline{\rm conv}(\pi^n(H)x)$ is a $\pi^n(H)$-invariant weakly compact convex subset of $X^n$ and thus, by the Ryll-Nardzewski fixed point theorem, $\pi^n(H)$ fixes a point on $C$, whereby
$0\in \overline{\rm conv}(\pi^n(H)x)$.
Therefore, for any $\eps> 0$ and $(y_1,\ldots, y_n)\in X^n$ there are $h_i\in H$ and $\lambda_i>0$, $\sum_i \lambda_i=1$, such that 
for all $k=1,\ldots ,n$,
$$
\Norm{\sum_i\lambda_i\pi(h_i)y_k}<\eps.
$$

In particular, if $C\subseteq X$ is a closed convex $\alpha(H)$-invariant set, $\eps>0$  and $K\subseteq G$ compact, fix $y\in C$ and  find $g_1,\ldots,g_n\in K$ such that $\{\alpha(g_1)y,\ldots,\alpha(g_n)y\}$
is $\frac \eps 2$-dense in $\alpha(K)y$. Choose now $h_i$ and $\lambda_i$ as above such that
$$
\Norm{\sum_i\lambda_i\pi(h_i)(y-\alpha(g_k)y)}<\frac\eps2
$$
for all $k=1,\ldots,n$. Thus, if $g\in K$, pick $k$ such that $\norm{\alpha(g)y-\alpha(g_k)y}<\frac\eps2$. 
Then, since $\Norm{\sum_i\lambda_i\pi(h_i)}\leqslant 1$, 
\[\begin{split}
\Norm{\big(\sum_i\lambda_i\alpha(h_i)y\big)-\alpha(g)\big(\sum_i\lambda_i\alpha(h_i)y\big)}
&= \Norm{\big(\sum_i\lambda_i\alpha(h_i)y\big)-\big(\sum_i\lambda_i\alpha(g)\alpha(h_i)y\big)} \\
&= \Norm{\big(\sum_i\lambda_i\alpha(h_i)y\big)-\big(\sum_i\lambda_i\alpha(h_i)\alpha(g)y\big)}\\
&= \Norm{\sum_i\lambda_i\big(\alpha(h_i)y-\alpha(h_i)\alpha(g)y\big)}\\
&= \Norm{\sum_i\lambda_i\big(\pi(h_i)y-\pi(h_i)\alpha(g)y\big)}\\
&<\Norm{\sum_i\lambda_i\pi(h_i)(y-\alpha(g)y)}\\
&<\Norm{\sum_i\lambda_i\pi(h_i)(y-\alpha(g_k)y)}+\frac\eps2\\
&<\eps.
\end{split}\]
In other words, the point $\sum_i\lambda_i\alpha(h_i)y\in C$ is $(\alpha(K),\eps)$-invariant.
\end{proof}

\begin{cor}\label{abelian}
Let $\pi$ be a strongly continuous isometric linear representation of an abelian topological group $G$ on a reflexive Banach space $X$ and suppose that $\pi(G)$ has no fixed unit vectors. Then $\overline {H^1}(G,\pi)=0$, i.e., any affine isometric action with linear part $\pi$ almost has fixed points on $X$.
\end{cor}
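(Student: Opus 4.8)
The plan is to realize the given affine action of the abelian group $G$ as the restriction to one factor of an affine action of the product $G\times G$, and then invoke Proposition \ref{refl}. So fix an arbitrary cocycle $b\in Z^1(G,\pi)$ with associated affine action $\alpha$; what I want to show is that $b\in\overline{B^1(G,\pi)}$, equivalently that for every compact $K\subseteq G$ and every $\eps>0$ there is an $(\alpha(K),\eps)$-invariant vector in $X$. Once this is done for arbitrary $b$, it follows that $\overline{H^1}(G,\pi)=0$.

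First I would set $H=G$ and define $\tilde\pi\colon G\times H\til \GL X$ by $\tilde\pi(g,h)=\pi(gh)$. Since $G$ is abelian the operators $\pi(g)$ and $\pi(h)$ commute, and a direct computation then shows that $\tilde\pi$ is a strongly continuous isometric linear representation of $G\times H$ on $X$; this is the one place where commutativity is used essentially. In the same spirit I would put $\tilde b(g,h)=b(gh)$ and check, straight from the cocycle identity $b(gf)=\pi(g)b(f)+b(g)$, that $\tilde b\in Z^1(G\times H,\tilde\pi)$. Writing $\tilde\alpha=(\tilde\pi,\tilde b)$, this produces an affine isometric action of the product group $G\times H$ on $X$, which is exactly the setting of Proposition \ref{refl}.

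Next I would apply Proposition \ref{refl} to $\tilde\alpha$ and rule out its first alternative. Conclusion (1) would supply a $\tilde\pi(H)$-invariant unit vector; but $\tilde\pi(H)=\{\pi(h):h\in H\}=\pi(G)$, so such a vector would be a unit vector fixed by all of $\pi(G)$, contradicting the hypothesis that $\pi(G)$ has no fixed unit vectors. Hence conclusion (2) must hold: for every closed convex $\tilde\alpha(H)$-invariant set $C\subseteq X$, the restriction $\tilde\alpha|_G$ almost has fixed points on $C$.

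Finally I would observe that $\tilde\alpha(g,1)y=\tilde\pi(g,1)y+\tilde b(g,1)=\pi(g)y+b(g)=\alpha(g)y$, so that $\tilde\alpha|_G$ is literally the original action $\alpha$. Taking $C=X$, which is trivially closed, convex, and $\tilde\alpha(H)$-invariant, conclusion (2) then gives that $\alpha$ almost has fixed points on $X$, i.e. $b\in\overline{B^1(G,\pi)}$, as desired. The only genuinely delicate point is the verification that abelianness turns $\tilde\pi$ into a homomorphism and collapses $\tilde\pi(H)$ onto $\pi(G)$; beyond that, all the analytic content is carried by Proposition \ref{refl}, and the remaining steps are routine bookkeeping.
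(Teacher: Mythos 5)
Your proof is correct and is exactly the paper's argument (the paper compresses it into one sentence: consider the representation of $G\times G$ acting by $\pi$ through each factor and apply Proposition \ref{refl}); you have merely spelled out the routine verifications that commutativity makes $\tilde\pi(g,h)=\pi(gh)$ and $\tilde b(g,h)=b(gh)$ a bona fide representation and cocycle of $G\times G$, that alternative (1) is excluded, and that $C=X$ works in alternative (2).
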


\begin{proof}
It suffices to consider the linear representation of $G\times G$ given by $\pi$ separately on the first and second factor and then apply Proposition \ref{refl}.
\end{proof}

Let us also note that Corollary \ref{abelian} fails for more general Banach spaces, e.g., for $\ell_1$. To see this, let $\pi$ denote the left regular representation of $\Z$ on $\ell_1(\Z)$ and let $b\in Z^1(\Z,\pi)$ be given by 
$b(n)=e_0+e_1+\ldots+e_{n-1}$. Then $\pi$ has no invariant unit vectors. Also, if $x=\sum_{n=-k}^ka_ne_n$ is any finitely supported vector, we have
\[\begin{split}
\norm{x-\alpha(1)x}
=&|a_{-k}|+|a_{-k+1}-a_{-k}|+\ldots+|a_{-1}-a_{-2}|+|a_0-a_{-1}+1|\\
&+|a_1-a_0|+\ldots+|a_k-a_{k-1}|+|a_k|\\
\geqslant&1.
\end{split}\]
So $\norm{x-\alpha(1)x}\geqslant 1$ for all $x\in \ell_1(\Z)$ and $b\notin \overline{B^1(G,\pi)}$. 

\begin{cor}\label{refl cor}
If $\alpha(G\times H)$ has no fixed point on $X$ and $\pi(G)$ and $\pi(H)$ no invariant unit vectors, then 
\begin{enumerate}
\item $\alpha|_G$ and $\alpha|_H$ almost have fixed points, and
\item $\pi|_G$ and $\pi|_H$ almost have invariant unit vectors.
\end{enumerate}
\end{cor}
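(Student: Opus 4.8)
The plan is to deduce both items from Proposition~\ref{refl} together with the quantitative dichotomy recorded in Section~1, which relates almost fixed points of $\alpha$, almost invariant unit vectors of $\pi$, and genuine coboundaries. First I would dispatch (1). Since $\pi(H)$ has no invariant unit vector, alternative (1) of Proposition~\ref{refl} fails, so alternative (2) holds; applying it with the closed convex $\alpha(H)$-invariant set $C=X$ shows that $\alpha|_G$ almost has fixed points. Exchanging the roles of $G$ and $H$ (legitimate, as $G\times H=H\times G$ and $\pi(G)$ likewise has no invariant unit vector) gives the same for $\alpha|_H$. Note that this half does not even use the hypothesis that $\alpha(G\times H)$ has no fixed point.

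For (2) I would argue by contradiction for $\pi|_G$, the case of $\pi|_H$ being symmetric. Suppose $\pi|_G$ does not almost have invariant unit vectors. By (1) we already know $\alpha|_G$ almost has fixed points, i.e.\ $b|_G\in\overline{B^1(G,\pi|_G)}$. The dichotomy from Section~1 — picking approximate fixed vectors $x_{K,1}$ and splitting according to whether they may be taken of arbitrarily large norm — then forces the second alternative, namely $b|_G\in B^1(G,\pi|_G)$; that is, $\alpha|_G$ has a genuine fixed point. Let $F=\{x\in X:\alpha(g)x=x\ \forall g\in G\}$ be its nonempty fixed-point set.

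The crux is then the elementary observation that $F$ must be a single point. Indeed, if $x,x'\in F$, subtracting the identities $\pi(g)x+b(g)=x$ and $\pi(g)x'+b(g)=x'$ yields $\pi(g)(x-x')=x-x'$ for every $g\in G$; since $\pi(G)$ has no invariant unit vector its fixed subspace is trivial, so $x=x'$. Moreover $F$ is $\alpha(H)$-invariant because the two factors commute: for $x_0\in F$ and $h\in H$ one has $\alpha(g)\alpha(h)x_0=\alpha(h)\alpha(g)x_0=\alpha(h)x_0$, so $\alpha(h)x_0\in F=\{x_0\}$ and hence $\alpha(h)x_0=x_0$. Thus the unique point of $F$ is fixed by all of $G\times H$, contradicting the hypothesis. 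Therefore $\pi|_G$, and by symmetry $\pi|_H$, almost has invariant unit vectors.

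The step I expect to be the main obstacle is the passage, inside (2), from ``$\pi|_G$ fails to almost have invariant unit vectors'' to ``$\alpha|_G$ has a genuine fixed point'': this is exactly the content of the Section~1 dichotomy, and one must check it is being applied to the restricted data $(G,\pi|_G,b|_G)$, whose linear part $\pi|_G$ still has no invariant unit vectors and whose ambient space $X$ is still reflexive. Once a fixed point is produced, collapsing $F$ to a point and promoting it to a $G\times H$-fixed point via commutativity is routine.
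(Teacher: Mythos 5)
Your proof is correct and follows the same overall strategy as the paper: item (1) comes from Proposition~\ref{refl} applied with $C=X$, and item (2) comes from the Section~1 dichotomy, which says that for a cocycle in $\overline{B^1(G,\pi|_G)}$ either $\pi|_G$ almost has invariant unit vectors or the cocycle is a genuine coboundary. The one place you diverge is in ruling out a genuine fixed point for a single factor: the paper notes that if $\alpha(H)$ fixed a point $x$, then $C=\{x\}$ is a closed convex $\alpha(H)$-invariant set, so Proposition~\ref{refl}(2) forces $\alpha|_G$ to almost fix points on the singleton $\{x\}$, i.e.\ to fix $x$ outright, contradicting the hypothesis; you instead observe that the fixed-point set of $\alpha|_G$ is an affine subspace whose direction is the space of $\pi(G)$-invariant vectors, hence a single point, and that commutativity of the factors makes this point $\alpha(H)$-invariant. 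Both arguments are sound; yours is more elementary and self-contained (it does not re-invoke Proposition~\ref{refl}), while the paper's reuse of the proposition on a degenerate convex set is slightly slicker and avoids the explicit uniqueness argument. Your flagged concern about applying the Section~1 dichotomy to the restricted data $(G,\pi|_G,b|_G)$ is unproblematic: the space is unchanged and reflexivity is all that is used there.
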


\begin{proof}
Item (1) follows directly from Proposition \ref{refl}, which means that $b|_G\in \overline {B^1(G, \pi|_G)}$ and $b|_H\in \overline {B^1(H, \pi|_H)}$. However, neither $\alpha(G)$ nor $\alpha(H)$ have fixed points, i.e., $b|_G\notin {B^1(G, \pi|_G)}$ and $b|_H\notin  {B^1(H, \pi|_H)}$. For if, e.g., $\alpha(H)$ fixed a point $x\in X$, then $C=\{x\}$ would be a closed convex $\alpha(H)$-invariant set on which $\alpha|_G$ would have almost fixed points, i.e., $x$ would be fixed by $\alpha(G)$ and so $x$ would be a fixed point for $\alpha(G\times H)$, contradicting our assumptions. Thus, neither $ {B^1(G, \pi|_G)}$ nor $ {B^1(H, \pi|_H)}$ is closed, whereby (2) follows.
\end{proof}

\begin{cor}\label{decompo}
Suppose $G=G_1\times \ldots\times G_n$ is a product of topological groups and $\pi\colon G\til GL(X)$ is a linear isometric representation on a separable reflexive space $X$. Then $X$ admits a decomposition into $\pi(G)$-invariant linear subspaces
$
X=V\oplus Y_1\oplus\ldots\oplus Y_n\oplus  W
$,
such that 
\begin{enumerate}
\item $V$ is the space of $\pi(G)$-invariant vectors,
\item any $b\in Z^1(G,\pi^{Y_i})$ factors through a cocycle defined on $G_i$,
\item $Z^1(G,\pi^{W})\subseteq \overline{B^1(G_1,\pi^W)}\oplus \ldots \oplus  \overline{B^1(G_n,\pi^W)}$,
\end{enumerate}
where $\pi^W$ denotes the restriction of $\pi$ to the invariant subspace $W$ and similarly for $Y_i$.
\end{cor}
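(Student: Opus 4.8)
The plan is to manufacture the decomposition from a commuting family of canonical fixed‑point projections and then read off properties (1)--(3) from Proposition \ref{refl}. Write $H_i=\prod_{j\neq i}G_j$, so that $G=G_i\times H_i$, and let $X^G$, $X^{G_i}$, $X^{H_i}$ denote the corresponding spaces of $\pi$-fixed vectors. Since $X$ is reflexive and the action isometric, each such subgroup admits a canonical contractive mean ergodic projection onto its fixed space (the Alaoglu--Birkhoff/Ryll--Nardzewski projection, obtained exactly as in the proof of Proposition \ref{refl} by selecting the fixed point inside $\overline{\rm conv}$ of an orbit); call these $P=P_G$ and $Q_i=P_{H_i}$. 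Each lies in the commutant $\pi(G)'$, being the fixed‑point projection of a central subgroup whose fixed space and canonical complement $\pi(G)$ preserves, while $Q_j$ lies in the weak‑operator closed convex hull of $\pi(H_j)$; hence $P$ and all the $Q_i$ mutually commute. First I would record the composition identities: $V={\rm ran}\,P=X^G\subseteq X^{H_i}={\rm ran}\,Q_i$ gives $PQ_i=Q_iP=P$, and for $i\neq j$ (with $n\geq2$) the product $Q_iQ_j$ is a contractive $\pi(G)$-equivariant projection onto $X^{H_i}\cap X^{H_j}=X^{\langle H_i,H_j\rangle}=X^G$, so by uniqueness of the mean ergodic projection $Q_iQ_j=P$.

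Granting this, set $Y_i={\rm ran}(Q_i-P)$ and $W={\rm ran}\big(I-\sum_i Q_i+(n-1)P\big)$. The relations $PQ_i=Q_iP=P$ and $Q_iQ_j=P$ make $P,\,Q_1-P,\ldots,Q_n-P$ pairwise annihilating commuting idempotents whose sum is a projection, so $X=V\oplus Y_1\oplus\cdots\oplus Y_n\oplus W$ with all summands $\pi(G)$-invariant, which is (1). For (2), note $Y_i\subseteq X^{H_i}$, so $\pi(H_i)$ acts trivially on $Y_i$. Given $b\in Z^1(G,\pi^{Y_i})$, put $c=b|_{G_i}$ and $d=b|_{H_i}$; the cocycle identity on $H_i$ shows $d$ is additive into $Y_i$, while comparing the values of $b$ on $(g,1)(1,h)$ and $(1,h)(g,1)$ and using $\pi(h)|_{Y_i}={\rm Id}$ forces $\pi(g)d(h)=d(h)$ for all $g\in G_i$. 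Thus $d(h)\in Y_i\cap X^{G_i}\subseteq X^{H_i}\cap X^{G_i}=X^G=V$, and since $Y_i\cap V=0$ we get $d\equiv0$; hence $b(g,h)=c(g)$ depends only on the $G_i$-coordinate, i.e.\ $b$ factors through a cocycle on $G_i$.

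For (3) the crucial observation is that $W\subseteq\ker Q_i=\ker P_{H_i}$ for every $i$, and the canonical complement of a fixed space contains no fixed vectors, so $\pi^W(H_i)$ has no invariant unit vectors. Applying Proposition \ref{refl} to the pair $(G_i,H_i)$ on the space $W$ with $C=W$, option (1) is excluded and option (2) gives that $\alpha|_{G_i}$ almost has fixed points on $W$; in cocycle terms this says $b|_{G_i}\in\overline{B^1(G_i,\pi^W)}$ for each $i$. To recover $b$ itself I would use the telescoping identity along the filtration $G_1\times\cdots\times G_k$: writing $\sigma_k=(g_1,\ldots,g_{k-1},1,\ldots,1)$, the cocycle identity yields $b(g)=\sum_{k=1}^n\pi(\sigma_k)\,b|_{G_k}(g_k)$, so $b$ is determined by, and assembled from, its restrictions $b|_{G_k}$. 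Reading the direct sum in (3) through this reconstruction, the containment $Z^1(G,\pi^W)\subseteq\overline{B^1(G_1,\pi^W)}\oplus\cdots\oplus\overline{B^1(G_n,\pi^W)}$ follows from the $n$ reduced‑coboundary statements just obtained.

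The step I expect to be the genuine obstacle is (3), and specifically making the ``$\oplus$'' precise. The terms $\pi(\sigma_k)\,b|_{G_k}(g_k)$ in the telescoping formula are twisted by $\pi(\sigma_k)$ and are not individually $G$-cocycles — on $W$ the groups $H_k$ act without fixed vectors, so a restriction to $G_k$ cannot be inflated back to $G$ as an honest cocycle — so the summands must be interpreted through the injective restriction map $b\mapsto(b|_{G_1},\ldots,b|_{G_n})$ onto its compatibility‑constrained image rather than as literal subspaces of $Z^1(G,\pi^W)$. Controlling the closures $\overline{B^1(G_i,\pi^W)}$ under this identification, and checking that the approximating coboundaries for the different factors combine to approximate $b$, is where I expect the separability hypothesis to enter, via a diagonal/limiting argument. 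By contrast, the commuting‑projection bookkeeping of the first two paragraphs (existence and uniqueness of the mean ergodic projections and the identities $PQ_i=P$, $Q_iQ_j=P$) is routine, but must be carried out carefully to guarantee that $V\oplus Y_1\oplus\cdots\oplus Y_n\oplus W$ is a genuine topological direct sum.
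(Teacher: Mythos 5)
Your overall strategy --- decompose $X$ by means of canonical fixed-point projections for the subgroups $H_i=\prod_{j\neq i}G_j$ and then feed the pieces into Proposition \ref{refl} --- is essentially the paper's strategy (the paper builds a finer decomposition $X=\sum_{s\subseteq\{1,\ldots,n\}}X_s$ by recursively splitting off fixed subspaces, but the substance is the same), and your treatment of items (2) and (3), including the commutator computation forcing $b|_{H_i}\equiv 0$ on $Y_i$, the telescoping reconstruction of $b$ from its restrictions $b|_{G_k}$, and the reading of the ``$\oplus$'' in (3) through the injective restriction map, matches the paper's. The genuine gap is at the foundation of your construction: a canonical contractive $\pi(G)$-equivariant projection onto the fixed space of a group of isometries of a reflexive space is \emph{not} ``obtained exactly as in the proof of Proposition \ref{refl} by selecting the fixed point inside $\overline{\rm conv}$ of an orbit.'' Ryll--Nardzewski gives, for each $x$, \emph{some} fixed point in $\overline{\rm conv}(\pi(H_i)x)$; it is in general neither unique nor canonically selectable, and no linearity, contractivity or equivariance of such a selection follows. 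Likewise, your assertion that $Q_j$ lies in the weak-operator closed convex hull of $\pi(H_j)$ is an Alaoglu--Birkhoff-type statement that is not available for general (non-amenable) groups on general reflexive spaces; note also that for $n\geqslant 3$ the subgroups $H_i$ and $H_j$ share factors and do not commute elementwise, so commutativity of $Q_i$ and $Q_j$ cannot be had cheaply from commuting convex hulls. The paper closes exactly this gap by citing Theorem 4.10 of \cite{FR}, which produces, for any group of linear isometries of a \emph{separable} reflexive space, an invariant decomposition into the fixed subspace and a canonical complement --- this is where separability enters, not in a diagonal argument in step (3) as you conjecture.

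Granting that theorem, your bookkeeping can be repaired: take $Q_i$ to be the projection onto $X^{H_i}$ along the canonical complement. Canonicity (concretely, the complement may be taken to be $\overline{\rm span}\{x-\pi(h)x:\; h\in H_i\}$, which every operator commuting with $\pi(H_i)$ maps into itself) gives $Q_i\in\pi(G)'$, hence each $Q_j$ preserves both ${\rm ran}\,Q_i$ and $\ker Q_i$ and therefore commutes with $Q_i$ without any convex-hull claim; the identity $Q_iQ_j=P$ then follows because $Q_iQ_j$ is a projection onto $X^{H_i}\cap X^{H_j}=X^G$ with kernel contained in $\ker P$. The remaining steps --- $W\subseteq\ker Q_i$ forces $\pi^W(H_i)$ to have no invariant unit vectors, so Proposition \ref{refl} applied to $G=G_i\times H_i$ with $C=W$ yields $b|_{G_i}\in\overline{B^1(G_i,\pi^W)}$ --- are correct and coincide with the paper's use of Corollary \ref{refl cor}; no further limiting argument is needed there, since (3) asserts nothing beyond these $n$ memberships together with the injectivity of $b\mapsto(b|_{G_1},\ldots,b|_{G_n})$.
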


\begin{proof}
By Theorem 4.10 of \cite{FR}, for any group of linear isometries of a separable reflexive space $Y$ there is an invariant decomposition of $Y$ into the subspace of fixed points and a canonical complement. Thus, by recursion on the size of $s\subseteq \{1,\ldots,n\}$, we obtain a $\pi(G)$-invariant decomposition
$$
X=\sum_{s\subseteq \{1,\ldots,n\}}X_s,
$$
where every non-zero $x\in X_s$ is fixed by $\pi\big(\prod_{i\notin s}G_i\big)$ and by none of $\pi(G_i)$ for $i\in s$. 
So if $b\in Z^1(G,\pi^{X_s})$ and $g\in \prod_{i\notin s}G_i$, then for any $h\in \prod_{i\in s}G_i$, 
$$
b(h)+b(g)=\pi(g)b(h)+b(g)=b(gh)=b(hg)=\pi(h)b(g)+b(h),
$$
i.e., $\pi(h)b(g)=b(g)$, which implies that $b(g)=0$. It follows that if $s\neq\tom$, then any $b\in Z^1(G,\pi^{X_s})$ factors through a cocycle defined on $\prod_{i\in s}G_i$. 

Also, if $|s|\geqslant 2$, then by Corollary \ref{refl cor} we see that any $b\in Z^1(G,\pi^{X_s})$ can be written as $b=b_1\oplus\ldots\oplus b_n$, where $b_i\in \overline{B^1(G_i,\pi^{X_s})}$. Thus, if we set $V=X_\tom$, $Y_i=X_{\{i\}}$ and $W=\sum_{|s|\geqslant 2}X_s$, the result follows.
\end{proof}

Proposition \ref{refl} was shown by Y. Shalom \cite{Sh} in the special case of locally compact $\sigma$-compact $G$ and $H$ and $X=\ku H$ a Hilbert space, but by different methods essentially relying on the local compactness of $G$ and $H$ and the euclidean structure of $X$. This also provided the central lemma for the rigidity results of \cite{Sh} via the following theorem, whose proof we include for completeness.

\begin{thm}[Shalom  \cite{Sh} for locally compact $G$ and $H$]
Let $\pi\colon G\times H\til GL(\ku H)$ be a strongly continuous isometric linear representation of a product of topological groups on a Hilbert space $\ku H$ and assume that neither $\pi(G)$ nor $\pi(H)$ have invariant unit vectors. 
Then $Z^1(G\times H,\pi)=\overline{B^1(G\times H,\pi)}$ and so $\overline{H^1}(G\times H,\pi)=0$.
\end{thm}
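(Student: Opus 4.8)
The plan is to show directly that $\alpha$ almost has fixed points on $\ku H$, which is exactly the assertion $b\in\overline{B^1(G\times H,\pi)}$. So fix a compact set $K\subseteq G\times H$, which we may take to be of the form $K=K_G\times K_H$, together with $\eps>0$; the goal is to manufacture a single vector that is $(\alpha(K),\eps)$-invariant. The first step is to feed the invariant set $C=\ku H$ into Proposition \ref{refl} twice. Since $\pi(H)$ has no invariant unit vectors, clause (2) of the proposition gives that $\alpha|_G$ almost has fixed points on $\ku H$; applying the proposition with the roles of $G$ and $H$ interchanged and using that $\pi(G)$ has no invariant unit vectors, $\alpha|_H$ likewise almost has fixed points on $\ku H$.

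Next I would build an auxiliary convex set adapted to the $H$-direction. Put $\delta=\eps/2$ and let $C=\{x\in\ku H : \sup_{h\in K_H}\norm{\alpha(h)x-x}\leqslant\delta\}$ be the set of $(\alpha(K_H),\delta)$-invariant vectors. Each defining condition cuts out a closed convex set, since $x\mapsto\alpha(h)x-x$ is affine and the norm is convex, so $C$ is closed and convex, and it is non-empty because $\alpha|_H$ almost has fixed points. The decisive structural point is that $C$ is $\alpha(G)$-invariant: as $G$ and $H$ commute and each $\alpha(g)$ is an affine isometry, for $x\in C$, $g\in G$ and $h\in K_H$ we have $\norm{\alpha(h)\alpha(g)x-\alpha(g)x}=\norm{\alpha(g)\alpha(h)x-\alpha(g)x}=\norm{\alpha(h)x-x}\leqslant\delta$, so $\alpha(g)x\in C$.

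The Hilbert space structure now enters through the nearest-point projection $P_C\colon\ku H\to C$ onto the closed convex set $C$. I would use two standard facts: $P_C$ is nonexpansive, and it commutes with any affine isometry that preserves $C$, so that $P_C\circ\alpha(g)=\alpha(g)\circ P_C$ for all $g\in G$ (the vector $\alpha(g)P_C(x)$ lies in $C$ and realises the same minimal distance to $\alpha(g)x$, so it must equal $P_C(\alpha(g)x)$). Choosing $x$ to be $(\alpha(K_G),\eps/2)$-invariant, which the first step permits, and setting $z=P_C(x)\in C$, I would then estimate, for $g\in K_G$, that $\norm{\alpha(g)z-z}=\norm{P_C(\alpha(g)x)-P_C(x)}\leqslant\norm{\alpha(g)x-x}<\eps/2$, while membership $z\in C$ gives $\norm{\alpha(h)z-z}\leqslant\delta$ for $h\in K_H$. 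Since $\norm{\alpha(g,h)z-z}\leqslant\norm{\alpha(g)z-z}+\norm{\alpha(h)z-z}$, the vector $z$ is $(\alpha(K),\eps)$-invariant, and as $K$ and $\eps$ were arbitrary we conclude $b\in\overline{B^1(G\times H,\pi)}$.

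I expect the main obstacle to be pinning down exactly which feature of Hilbert space is indispensable, namely the nonexpansiveness $\norm{P_C(u)-P_C(v)}\leqslant\norm{u-v}$ of the metric projection. Existence and uniqueness of nearest points, as well as the equivariance used above, survive in any uniformly convex, hence reflexive, space, but the $1$-Lipschitz bound genuinely depends on the inner product and fails for general reflexive $X$; this is precisely the point at which the argument leaves the reflexive setting of Proposition \ref{refl} and is the reason the statement is confined to $\ku H$.
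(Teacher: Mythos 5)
Your argument is correct and coincides with the paper's own proof: both apply Proposition \ref{refl} (in each factor) to obtain a non-empty closed convex $\alpha(G)$-invariant set $C$ of $(\alpha(K_H),\eps/2)$-invariant points together with an $(\alpha(K_G),\eps/2)$-invariant point, and then push the latter into $C$ via the nearest-point projection, using exactly the $\alpha(G)$-equivariance and $1$-Lipschitz property of $P_C$ that you isolate. Your closing remark about nonexpansiveness of the metric projection being the genuinely Hilbertian ingredient matches the paper's own discussion of why the result is stated only for $\ku H$.
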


\begin{proof}
Let $b\in Z^1(G\times H,\pi)$ be given with corresponding affine isometric action $\alpha$ and fix compact subsets $K\subseteq G$, $L\subseteq H$ and an $\eps>0$.
Then, by Proposition \ref{refl}, the closed convex $\alpha(G)$-invariant set $C\subseteq \ku H$ of $(\alpha(L),\eps/2)$-invariant points is non-empty. Similarly, there is an  $(\alpha(K),\eps/2)$-invariant point in $\ku H$.

Now, by the euclidean structure of $\ku H$,  for any $y\in \ku H$, there is a unique point $P(y)\in C$ closest to $y$ and, as $\alpha(G)$ acts by isometries on $\ku H$ leaving $C$ invariant, the map $P$ is $\alpha(G)$-equivariant, i.e., $P(\alpha(g)y)=\alpha(g)P(y)$. Moreover, using the euclidean structure again, $P$ is $1$-Lipschitz, whereby 
$$
\norm{P(y)-\alpha(g)P(y)}=\norm{P(y)-P(\alpha(g)y)}\leqslant \norm {y-\alpha(g)y},
$$
for all $y\in \ku H$ and $g\in G$. In particular, if $y\in \ku H$ is $(\alpha(K),\eps/2)$-invariant, then $P(y)$ is both $(\alpha(K),\eps/2)$ and $(\alpha(L),\eps/2)$-invariant, i.e., $P(y)$ is $(\alpha(K\times L),\eps)$-invariant. Since $K$, $L$ and $\eps$ are arbitrary, we have that $b\in \overline {B^1(G\times H,\pi)}$.
\end{proof}

U. Bader, T. Gelander, A. Furman and N. Monod \cite{furman} studied the structure of affine actions of product groups on uniformly convex spaces (a subclass of the reflexive spaces) and in this setting obtained a slightly weaker result than Shalom. Namely, if  $\pi\colon G\times H\til GL(X)$ is a strongly continuous isometric linear representation of a product of topological groups on a uniformly convex space  $X$ such that neither $\pi(G)$ nor $\pi(H)$ have invariant unit vectors, then either
\begin{enumerate}
\item[(a)] $\pi$ almost has invariant unit vectors, or
\item[(b)] $Z^1(G\times H,\pi)={B^1(G\times H,\pi)}$.
\end{enumerate}
Proposition \ref{refl} is somewhat independent of their statement and shows that one can add that $\alpha|_G$ and $\alpha|_H$ almost have fixed points to (a) above.

\end{document}